\documentclass[11pt]{amsart}
\usepackage[utf8]{inputenc}
\usepackage{amsmath}
\usepackage{hyperref}
\usepackage{amssymb}
\usepackage{euscript}
\usepackage{bbold}
\usepackage{amssymb}
\usepackage{mathrsfs}
\usepackage{graphicx}

\usepackage{mathtools, amssymb, amsfonts, amsthm, enumitem}
\usepackage{epsfig, psfrag, color, multicol, graphicx, tikz,pdfsync}
\usepackage{amsfonts}
\usepackage{epstopdf}
\usepackage{xspace}

\usepackage{hyperref}

\usepackage[all]{xy}

\voffset=5mm
\oddsidemargin=8pt \evensidemargin=8pt
\headheight=8pt     \topmargin=-30pt % 26pt%
\textheight=630pt   \textwidth=460pt

\usepackage{amsthm}
\usepackage{amsmath}

\newtheorem{definition}{Definition}
\newtheorem{hypothesis}{Conjecture}
\newtheorem{theorem}{Theorem}
\newtheorem{statement}{Statement}

\newtheorem{step}{Step}
\newtheorem{lemma}{Lemma}

\DeclareMathOperator{\Vol}{Vol}

\DeclareMathOperator{\diag}{diag}
\DeclareMathOperator{\const}{const}

\def \Rn {\mathbb{R}^n}
\def \R {\mathbb{R}}
\def \E {\mathcal{E}}

\def \v {\tilde V}

\title[Uniqueness of a 3D ellipsoid with given intrinsic volumes]{
Uniqueness of a three-dimensional ellipsoid \\
with given intrinsic volumes}

\author[Fedor Petrov\and Alexander~Tarasov ]{Fedor~Petrov$^{\dagger,\star}$, \ \,  and  \  Alexander~Tarasov$^{\dagger}$}

\thanks{\thinspace ${\hspace{-.45ex}}^\dagger$St. Petersburg
State University. 29b,~
the 14th line of the Vasilievsky Island.
St.\ Petersburg, Russia.}

\thanks{\thinspace ${\hspace{-.45ex}}^\star$
St. Petersburg Department 
of Steklov Mathematical Institute 
of Russian Academy of Sciences.
27, Fontanka. St. Petersburg, Russia}

\thanks{\thinspace e-mails: f.v.petrov@spbu.ru,
science.tarasov@gmail.com}

\thanks{\thinspace \
\today}

%\date{4 May, 2019}

\begin{document}

\maketitle
\begin{abstract}
   Let $\mathcal{E}$ be an ellipsoid in $\mathbb{R}^n$. A. Gusakova and D. Zaporozhets conjectured that $\mathcal{E}$ is uniquely (up to rigid
   motions) determined by its intrinsic volumes. We prove this conjecture for $n = 3$.
   %, when we can assume for simplicity that intrinsic volumes are the mean width, surface area and standard volume.
\end{abstract}
\section{Introduction}

\subsection{Intrinsic volumes}
 
For a bounded convex set $K\subset \mathbb{R}^n$ the \textit{intrinsic volumes} $V_0(K),\dots, V_n(k)$ are defined as the coefficients in the Steiner formula 
\begin{equation}
    \Vol(K+t B_n)=\sum_{k=0}^n \kappa_{n-k}V_k(K)t^{n-k},
\end{equation}
where $B_n$ denotes the Euclidean unit ball in $ \mathbb{R}^n$, $\kappa_k=\pi^{k/2}/\Gamma(\frac{k}{2}+1)$ denotes the volume of $B_k$, and $\Vol$ denotes the $n-$dimensional volume. Kubota's  formula
states that 
\begin{equation}
    V_k(K)=\binom{n}{k}\frac{\kappa_{n}}{\kappa_{k}\kappa_{n-k}}\int_{G_n,m}\Vol_k(p_{\nu}(K))d\omega (\nu),\quad 1\le k \le n.
\end{equation}
Here $G_{n,k}$ denotes the Grassmannian of all $k$-dimensional linear subspaces of $\mathbb{R}^n$; $p_{\nu}(K)$ denotes the orthogonal projection of $K$ to $\nu \in G_{n,k}$; $d\omega$ is the $O(n)$-invariant probabilistic measure
on $G_{n,k}$ (see \cite{Burago}, 19.3.2).

In particular, if $k=1$, $n-1$, $n$, then $V_k(K)$ coincides up to a constant factor with the so-called mean width, surface area and
the $n$-dimensional volume of $K$.

It is clear that in general the 
convex body can not be determined by the sequence of its intrinsic volumes,
but this may be expected to be the case
for certain natural $n$-parametric families
of convex bodies. 
For rectangular parallelepipeds, 
their intrinsic volumes up to constant factor are elementary symmetric functions of the edge lengths. Hence by Vieta theorem the edge lenghts are the roots of the corresponding polynomial. Therefore we can uniquely recover the
edge lengths of the 
rectangular parallelepiped by its intrinsic volumes.

Anna Gusakova and Dmitry Zaporozhets (2017) conjectured the
uniqueness in the class of ellipsoids:

\begin{hypothesis}
If $\mathcal{E}_1,
\mathcal{E}_2$ are two ellipsoids in $\mathbb{R}^n$ such that
$V_i(\mathcal{E}_1)=V_i(\mathcal{E}_2)$
for all $i=1,2,\ldots,n$, then 
$\mathcal{E}_1$ and $\mathcal{E}_2$
are congruent.
\end{hypothesis}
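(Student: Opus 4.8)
\emph{Outline of the approach.} Up to rigid motions an ellipsoid is a diagonal positive-definite matrix $A=\diag(a_1^2,\dots,a_n^2)$, and $V_n(\mathcal E)=\kappa_n\sqrt{\det A}$ already recovers the product $a_1^2\cdots a_n^2$; since each $V_k$ is homogeneous of degree $k$ in the semi-axes, it suffices to prove that on the normalised shape space
\[
\mathcal S=\bigl\{t=(t_1,\dots,t_n)\in\mathbb R_{>0}^n:\ t_1\cdots t_n=1\bigr\}\big/ S_n ,
\]
parametrised by the elementary symmetric functions $e_1(t),\dots,e_{n-1}(t)$ (the value $e_n(t)=1$ being fixed) and smooth off its diagonal strata, the map $\Phi=(V_1,\dots,V_{n-1})$ is injective; here $\mathcal E_t$ is the ellipsoid with squared semi-axes $t_i$. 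The first step is to make $\Phi$ explicit: combining Kubota's formula with the Cauchy--Binet identity $\det(U^{\top}AU)=\sum_{|S|=k}p_S(\nu)^2\prod_{i\in S}t_i$ for an orthonormal frame $U$ of $\nu\in G_{n,k}$ (with Plücker coordinates $p_S(\nu)$, normalised by $\sum_{|S|=k}p_S(\nu)^2=1$) yields
\[
V_k(\mathcal E_t)=c_{n,k}\int_{G_{n,k}}\Bigl(\,\sum_{|S|=k}p_S(\nu)^2\prod_{i\in S}t_i\Bigr)^{1/2}d\omega(\nu),\qquad 1\le k\le n-1,
\]
for suitable constants $c_{n,k}>0$; for $n=3$ these are the classical elliptic-integral expressions for the mean width ($k=1$) and surface area ($k=2$) of $\mathcal E_t$.

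The second step is a monotonicity statement valid in every dimension. Fix a pair $i\ne j$ and deform $t$ by $t_i(s)=e^{s}\sqrt{t_it_j}$, $t_j(s)=e^{-s}\sqrt{t_it_j}$ with all other coordinates unchanged; decreasing $|s|$ I call a \emph{compression}. Splitting the sum over $S$ according to $|S\cap\{i,j\}|$, the integrand above becomes $\bigl(C_0+C_1e^{s}+C_2e^{-s}\bigr)^{1/2}$ with $C_0,C_1,C_2\ge 0$ depending on $\nu$. Using the $i\leftrightarrow j$ symmetry of $d\omega$ together with the elementary fact that for $g(s)=C_0+C_1e^{s}+C_2e^{-s}$ the sum $\sqrt{g(s)}+\sqrt{g(-s)}$ is nondecreasing in $s\ge 0$ (because $g(s)+g(-s)$ and $g(s)g(-s)$ are), one gets that $s\mapsto V_k(\mathcal E_{t(s)})$ is even and strictly increasing in $|s|$ — strictly, since for $k\le n-1$ one has $p_S(\nu)\ne 0$ with $|S\cap\{i,j\}|=1$ on a set of positive measure. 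Equivalently, every $V_k$ with $1\le k\le n-1$ is strictly Schur-convex in $(\log t_1,\dots,\log t_n)$; the ball is the unique common minimiser, and the compression trajectories foliate $\mathcal S$ and contract it to the ball.

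The third step turns this into injectivity of $\Phi$. Off the diagonal strata I would establish that $d\Phi$ has full rank by differentiating the integral formula in $\log t_i$ and recognising the Jacobian as a non-degenerate matrix of weighted Grassmannian moments (a Cauchy--Binet/Vandermonde positivity). At the ball $\Phi$ is itself a critical map, each $V_k$ having a strict minimum there, so a separate analysis is needed: writing $t_i=e^{\epsilon_i}$ with $\sum_i\epsilon_i=0$, the function $V_k(\mathcal E_t)$ is a real-analytic $S_n$-invariant germ, hence a power series in the power sums $p_j(\epsilon)=\sum_i\epsilon_i^{\,j}$, $2\le j\le n$, and its linear part
\[
V_k(\mathcal E_t)=\mathrm{const}+\sum_{j=2}^{n}m_{kj}\,p_j(\epsilon)+(\text{higher order in }p_2,\dots,p_n)
\]
has coefficients $m_{kj}$ that are again explicit integrals over $G_{n,k}$. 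Since the proper injective map $t\mapsto(p_2(\epsilon),\dots,p_n(\epsilon))$ already separates points of $\mathcal S$ near the ball, invertibility of the $(n-1)\times(n-1)$ matrix $(m_{kj})_{1\le k\le n-1,\ 2\le j\le n}$ makes $\Phi$ locally injective there (a Weierstrass-type argument). Given these two non-degeneracy facts, the compression foliation of the second step propagates local injectivity outward from the ball to all of $\mathcal S$ (a degree-theoretic/connectedness argument using that $\Phi$ is proper, since already $V_1\to\infty$ as $\mathcal E_t$ degenerates and the fibre $\Phi^{-1}(\Phi(\text{ball}))$ reduces to the ball), yielding injectivity of $\Phi$.

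The hard part is exactly the two non-degeneracy inputs of the third step — that $d\Phi$ never drops rank off the diagonal, and that the moment matrix $(m_{kj})$ is invertible — for general $n$. Both reduce to non-vanishing of integrals over $G_{n,k}$ of products of square roots of quadratic forms in the Plücker coordinates, which are hyperelliptic-type integrals whose behaviour is delicate to control as $n$ grows. For $n=3$ the map $\Phi$ has only the two components above, expressed through complete elliptic integrals in two shape parameters, and the required sign and monotonicity properties can be pinned down by explicit estimates on those integrals; this is the technical core of the argument, and pushing the same programme to all $n$ is where genuinely new input on the higher-dimensional integrals is required.
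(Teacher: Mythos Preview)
The statement you are attempting is the full Gusakova--Zaporozhets conjecture for all $n$; the paper does \emph{not} prove it in that generality either --- it establishes only the case $n=3$ (Theorem~1). Your write-up is explicitly a programme rather than a proof: you yourself flag that the two non-degeneracy inputs of your third step (full rank of $d\Phi$ off the diagonal, and invertibility of the moment matrix $(m_{kj})$ at the ball) are left open for general $n$. So as a proof of the stated conjecture it is incomplete, just as the paper's is; the honest comparison is between your sketch and the paper's argument in the case $n=3$.

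For $n=3$ your outline and the paper converge on the same pivotal fact: the Jacobian of $(V_1,V_2,V_3)$ (equivalently of your $\Phi$) does not vanish when the semi-axes are pairwise distinct. The paper actually \emph{proves} this (its Lemma~1) by a concrete trick that you do not supply: after passing to logarithmic variables it writes the Jacobian as a double integral over $\mathbb R_+^2$, applies the inversion $(s,t)\mapsto(1/s,1/t)$, observes that this multiplies the integrand by $(st)^5$, and adds the two expressions to obtain an integrand of definite sign, namely a positive multiple of $((st)^2-1)(1-(st)^5)$. Your phrase ``recognising the Jacobian as a non-degenerate matrix of weighted Grassmannian moments (a Cauchy--Binet/Vandermonde positivity)'' does not capture this mechanism and, as stated, is not a proof.

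Your passage from local to global injectivity is also substantially looser than the paper's. Even granting Jacobian non-degeneracy off the diagonal and properness of $\Phi$, the map degenerates along the diagonal strata, so a bare ``degree-theoretic/connectedness argument using that $\Phi$ is proper'' does not go through without further work; your ``compression trajectories foliate $\mathcal S$'' is also imprecise (they do not form a foliation). The paper replaces this with a hands-on topological analysis specific to $n=3$: it shows that the joint level set $N=\{V_1=\mathrm{const},\ V_3=\mathrm{const}\}$ is a single smooth circle, that $N$ contains exactly six points with two equal coordinates (one per Weyl chamber boundary), and that on each of the six arcs between them $V_2$ is strictly monotone (this is exactly where the Jacobian lemma is used). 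That argument simultaneously handles the diagonal strata and delivers global injectivity.

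Your Schur-convexity computation in Step~2 is correct and pleasant; it yields that each $V_k$ ($1\le k\le n-1$) strictly increases under any volume-preserving ``stretch'' of a pair of semi-axes, hence properness and uniqueness of the ball as common minimiser. The paper uses only the $k=1$ instance of this idea (and the isoperimetric inequality) and does not rely on Schur-convexity for the injectivity step, so this part of your outline is a genuine, if auxiliary, addition rather than a substitute for the missing Jacobian and global arguments.
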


For $n=1$ and $n=2$ this conjecture is 
quite simple. The main goal of this note is to prove it in dimension $3$, when it can be formulated as follows.

\begin{theorem}
If the volume, surface area and mean width of two ellipsoids in $ \mathbb{R}^3 $ are the same, then the ellipsoids are congruent.
\end{theorem}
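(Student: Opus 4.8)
\emph{Step 1: rewriting the three intrinsic volumes.} The plan is first to express $V_1,V_2,V_3$ through one transcendental function of the squared semi-axes. Let $\mathcal E$ have semi-axes $a\ge b\ge c>0$, put $x=a^2\ge y=b^2\ge z=c^2>0$, let $d\sigma$ be the $O(3)$-invariant probability measure on $\mathbb{S}^2$, and set $J(x,y,z)=\int_{\mathbb{S}^2}(xu_1^2+yu_2^2+zu_3^2)^{1/2}\,d\sigma(u)$, a symmetric function, strictly increasing in each variable, strictly concave, and homogeneous of degree $\tfrac12$. The support function of $\mathcal E$ in the unit direction $u$ equals $(xu_1^2+yu_2^2+zu_3^2)^{1/2}$, and the area of the orthogonal projection of $\mathcal E$ onto $u^{\perp}$ equals $\pi(yz\,u_1^2+zx\,u_2^2+xy\,u_3^2)^{1/2}$; hence Kubota's formula (with $k=1$ and $k=2$) gives $V_1(\mathcal E)=\const\cdot J(x,y,z)$ and $V_2(\mathcal E)=\const\cdot J(yz,zx,xy)$, while $V_3(\mathcal E)=\const\cdot\sqrt{xyz}$. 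Thus the assertion becomes: the map $(x,y,z)\mapsto(xyz,\,J(x,y,z),\,J(yz,zx,xy))$ is injective on $\{x\ge y\ge z>0\}$.

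\emph{Step 2: reduction to a planar map.} Two ellipsoids with equal intrinsic volumes have equal $xyz$, and since $(x,y,z)\mapsto(tx,ty,tz)$ multiplies $xyz$ by $t^3$, one may normalise $xyz=1$. On $\{xyz=1\}$ we have $J(yz,zx,xy)=J(1/x,1/y,1/z)$, so it remains to prove that $\Phi:=(J_+,J_-)$, with $J_+(x,y,z)=J(x,y,z)$ and $J_-(x,y,z)=J(1/x,1/y,1/z)$, is injective on the closed two-dimensional sector $\Delta=\{x\ge y\ge z>0,\ xyz=1\}$. The corner of $\Delta$ is the round sphere $x=y=z=1$; its two boundary rays are the oblate spheroids $\{x=y\}$ and the prolate spheroids $\{y=z\}$.

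\emph{Step 3: $\Phi$ is a local diffeomorphism on $\operatorname{int}\Delta$ --- the main obstacle.} At an interior point, $d\Phi$ drops rank exactly when $\nabla J_+$, $\nabla J_-$ and $\nabla(xyz)$ are linearly dependent; as $\nabla(xyz)=(1/x,1/y,1/z)$ on $\Delta$, multiplying the $i$-th column of the relevant $3\times3$ determinant by $x_i$ turns this into $D=0$, where, writing $J_i:=\partial_iJ(x,y,z)$ and $\widehat J_i:=\partial_iJ(1/x,1/y,1/z)$,
\begin{equation*}
D=\det\begin{pmatrix}
1 & 1 & 1\\
x J_1 & y J_2 & z J_3\\
\tfrac1x\widehat J_1 & \tfrac1y\widehat J_2 & \tfrac1z\widehat J_3
\end{pmatrix}.
\end{equation*}
Since $J_i=\tfrac12\int_{\mathbb{S}^2}u_i^2(xu_1^2+yu_2^2+zu_3^2)^{-1/2}d\sigma$ and $\widehat J_i=\tfrac12\int_{\mathbb{S}^2}u_i^2(u_1^2/x+u_2^2/y+u_3^2/z)^{-1/2}d\sigma$, expanding $D$ by multilinearity in its lower two rows exhibits it as a double integral over $\mathbb{S}^2\times\mathbb{S}^2$ of a $3\times3$ determinant divided by a product of two square roots. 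That integrand changes sign, so $D\ne 0$ on $\operatorname{int}\Delta$ is a genuine cancellation estimate, and I expect this to be the hard part. The plan is to prove it by symmetrising the double integral over the $S_3$-action on the coordinates and over the two integration variables, reducing $D\ne0$ to a one-variable monotonicity inequality between Legendre (equivalently Carlson symmetric) elliptic integrals in the axis ratios --- or, failing that, by producing an explicit sign-preserving pairing of the pairs of directions.

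\emph{Step 4: the corner, the boundary rays, and globalisation.} At the corner $D$ necessarily vanishes. Expanding $(x,y,z)=(1,1,1)+w+\tfrac16|w|^2(1,1,1)+O(|w|^3)$ with $w\perp(1,1,1)$ and $r=|w|$ yields $J_+=1+\tfrac{r^2}{15}+O(r^3)$ and $J_-=1+\tfrac{r^2}{15}+O(r^3)$, so $\Phi$ collapses onto the diagonal to second order; the first angular dependence occurs at order $r^3$, where, by the dihedral symmetry of the $S_3$-action on the plane $\{(1,1,1)\}^{\perp}$, each of $J_{\pm}$ contributes a multiple of $r^3\cos 3\theta$ with $\theta\in[0,\tfrac{\pi}{3}]$ measured from a boundary ray; a third-derivative computation shows these two multiples cannot both vanish, and since $\cos 3\theta$ is strictly monotone on $[0,\tfrac{\pi}{3}]$ this gives injectivity of $\Phi$ near the corner. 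Along each boundary ray both coordinates of $\Phi$ are strictly monotone, and (since for spheroids everything is elementary) no oblate spheroid is intrinsic-volume-congruent to a prolate one, so $\Phi$ maps $\partial\Delta$ injectively onto a properly embedded arc. Finally $J_+(x,y,z)\ge\tfrac12\sqrt{x}\to\infty$ as one goes to infinity in $\Delta$, so $\Phi$ is proper; combining properness with the local-diffeomorphism property of Step 3 and the boundary analysis, a standard degree/connectedness argument gives $\#\Phi^{-1}(q)\le1$ for every $q$, which completes the proof.
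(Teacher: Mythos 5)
Your Steps 1--2 are a correct reformulation, essentially equivalent to the paper's setup: after normalising the volume, everything reduces to the injectivity of $(J_+,J_-)$ on the sector $\{x\ge y\ge z>0,\ xyz=1\}$, and your determinant $D$ is exactly the Jacobian of $(V_3,V_1,V_2)$ with respect to the semi-axes. The problem is that Step 3, which you yourself identify as ``the hard part'' and ``a genuine cancellation estimate'', is not proved: you offer only a plan (``symmetrise \dots or, failing that, produce an explicit sign-preserving pairing''). That non-vanishing of $D$ on the open sector is precisely the paper's key Lemma, the crux of the whole theorem, and it does not follow from soft symmetrisation over $S_3$. The paper proves it by converting each spherical (Gaussian) average into a one-dimensional integral via $T^{-1/2}=\pi^{-1/2}\int_{\mathbb{R}}e^{-Ts^2}\,ds$, so that the $3\times 3$ determinant becomes a double integral over $(s,t)$ with explicit numerator $(a^2-b^2)(a^2-c^2)(b^2-c^2)(s^2t^2-1)$ over a product of $3/2$-powers; the involution $(s,t)\mapsto(1/s,1/t)$ then maps the integral to itself while multiplying the integrand by $(st)^5$ and flipping the sign of $(s^2t^2-1)$, so twice the integral has numerator $((st)^2-1)\bigl(1-(st)^5\bigr)\le 0$ pointwise. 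That is exactly the ``sign-preserving pairing'' you hoped exists, but producing it is the mathematical content; without it your argument has a hole at its center.

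Step 4 also needs more than you give. The expansion ``$J_\pm=1+\tfrac{r^2}{15}+O(r^3)$ with first angular dependence a multiple of $r^3\cos 3\theta$'' and the claim that ``a third-derivative computation shows these two multiples cannot both vanish'' are asserted, not verified; and on a sector whose corner is a genuine singular point of $\Phi$, ``local diffeomorphism plus properness plus boundary injectivity implies global injectivity'' is not a standard theorem one can just cite --- the degree/monodromy argument must actually be carried out on a manifold with boundary and a corner where the rank drops. The paper sidesteps both issues: the corner (the ball) is excluded from the relevant level set by the strict isoperimetric inequality ($V_1$ is strictly minimal for fixed $V_3$ exactly at the ball), and globalisation is done by showing $M_{V_1}\cap M_{V_3}$ is a single circle carrying exactly six points with a repeated coordinate, whose complementary arcs lie in the six Weyl chambers, so two distinct points in the same closed chamber with equal $V_2$ would force an interior critical point of $V_2$ along the circle, contradicting the Jacobian lemma. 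In short: the skeleton of your reduction is sound and parallel to the paper's, but the two load-bearing steps (the Jacobian non-degeneracy and the global injectivity argument) are both left as plans rather than proofs.
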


\subsection{Explicit formulas for intrinsic volumes of ellipsoids}
Using Tsirelson's formula (see  \cite{Tsirelson} or Theorem 1.9 in
\cite{Ellipsoids}
for details) one can obtain the following expression for intrinsic volumes of ellipsoid
$\mathcal{E}$
with semiaxes $\{a_i\}_{i=1}^n$:

\begin{equation}\label{FurmulaIntrinsicVolumesEllipsoids}
        V_m(\mathcal{E})=\frac{(2\pi)^{m/2}}{m!}\mathbb{E}\sqrt{\det \left(MM^{\top}\right)},
\end{equation}
where the random rows
$\xi_1,\dots, \xi_m\in \mathbb{R}^m$ are i.i.d. $\sim \mathcal{N}(0,\diag(a_1^2,\dots, a_n^2))$ and $M$ 
is the $m\times n$ matrix whose rows are $\xi_1,\dots, \xi_m$. In other words, $\sqrt{\det \left(MM^{\top}\right)}$ is the $m$-dimensional
volume of the parallelepiped with the edge
vectors $\xi_1,\ldots,\xi_m$.

Taking $n = 3$, $m = 1$ in  $\eqref{FurmulaIntrinsicVolumesEllipsoids}$  we obtain the expression for the mean width of $\mathcal{E}$:
\begin{equation} \label{width}
    V_1(\mathcal{E})=\sqrt{2\pi}\mathbb{E}\sqrt{\langle\xi_1, \xi_1\rangle}=\sqrt{2\pi}\mathbb{E}\sqrt{a_1^2x^2+a_2^2y^2+a_3^2z^2}, \ \ x,y,z \sim \mathcal{N}(0,1).
\end{equation}

The next relation (see \cite{Ellipsoids}, prop. 4.8) states a duality between $V_k$ and $V_{n-k}$. Consider the following ellipsoids in $\mathbb{R}^n$:
\begin{equation*}
    \mathcal{E}=\{ x\in \Rn: \ \sum_{i=1}^{n}a_i^2x_i^2 \le 1\}, \ \ \mathcal{E}^*=\{ x\in \Rn: \ \sum_{i=1}^{n}\frac{x_i^2}{a_i^2} \le 1\}.
\end{equation*}
Then
\begin{equation*}
    V_k(\mathcal{E})=  \frac{\kappa_k}{\kappa_n \kappa_{n-k}}V_n(\E)V_{n-k}(\E^*).
\end{equation*}
Again, taking $n=3$ and $k=2$, we obtain
\begin{equation} 
    V_2(\E)=\frac{\pi}{2}a_1a_2a_3V_1(\E^*)=\frac{\pi^{3/2}}{\sqrt{2}} a_1a_2a_3\mathbb{E}\sqrt{\frac{1}{a_1^2}x^2+\frac{1}{a_2^2}y^2+\frac{1}{a_3^2}z^2}, \ \ x,y,z \sim \mathcal{N}(0,1).
\end{equation}

\section{Proof of Theorem 1}

Now we have the explicit formulas for intrinsic volumes of ellipsoids in $\mathbb{R}^3$. From now on we will use the notation $(a,b,c)$ for semiaxes of ellipsoids instead of $(a_1,a_2,a_3)$.

We parametrize the family of three-dimensional ellipsoids by semiaxes and so we identify it with $\mathbb{R}_+^3$.

\begin{definition} The function V is given by
\begin{equation}
     V(a,b,c)=\biggl(V_1(a,b,c),V_2(a,b,c),V_3(a,b,c)\biggr),
\end{equation}
where $V_j(a,b,c)$, $j=1$, $2$, $3$, is 
the $j$-th intrinsic volume of the 
ellipsoid with semiaxes $a,b,c$.
\end{definition}

This parameterization of ellipsoids by
semiaxes is not bijective on $\mathbb{R}_+^3$, but becomes bijective if
we restrict it to the
set of parameters $\{(a,b,c)\in \R_+^3:a\ge b \ge c\}.$

Fix a point $(a_0,b_0,c_0) \in \mathbb{R}_+^3$ such that
\begin{equation} \label{propertyoffixedpoint}
    a_0>b_0 \ge c_0 \ \ or \ \ a_0\ge b_0 > c_0
\end{equation}
For $i\in \{1, \ 3\}$ let $M_{V_i}=M_{V_i}(a_0,b_0,c_0)$ denote the level set of $V_i$:
\begin{equation*}
    M_{V_i}=\{(a,b,c) \in \R_+^3 : \ V_i(a,b,c)=V_i(a_0,b_0,c_0)\}.
\end{equation*}
Then $M_{V_3}$ is a smooth 2-dimensional manifold given by the equation $abc=\const$, 
and $M_{V_1}$ is a smooth 2-dimensional manifold since $V_1$ is a smooth function with nonzero gradient (this is clear
from differentiating \eqref{width},
see also the computations below.)
\begin{step}
    The manifolds $M_{V_1}$ and $M_{V_3}$ intersect transversally.
\end{step}

\begin{proof}
The reason is that 
the gradient vectors $\nabla V_1$ and $\nabla V_3$ 
have the opposite orders of coordinates: \begin{equation} \label{partial}\frac{\partial V_1}{\partial a}\ge\frac{\partial V_1}{\partial b}\ge\frac{\partial V_1}{\partial c}\ \ \text{and} \ \  \frac{\partial V_3}{\partial a}\le\frac{\partial V_3}{\partial b}\le\frac{\partial V_3}{\partial c},
\end{equation}
and the inequalities are strict
when corresponding semiaxes are not
equal. We have

$$\nabla V_3(a,b,c)=\nabla\left( \frac{4\pi}{3} abc\right)=\frac{4\pi}{3}abc\cdot\left(\frac{1}{a},\frac{1}{b},\frac{1}{c}\right),$$ so the inequality for partial derivatives of $V_3$ is clear.

We differentiate \eqref{width} and obtain
\begin{multline*}
    \frac{1}{\sqrt{2\pi}}\frac{\partial V_1}{\partial a}(a,b,c)=\mathbb{E}\frac{ax^2}{\sqrt{a^2{x^2}+b^2{y^2}+c^2{z^2}}}=\mathbb{E}\frac{x^2}{\sqrt{{x^2}+\frac{b^2}{a^2}{y^2}+\frac{c^2}{a^2}{z^2}}}\ge \\
    \mathbb{E}\frac{x^2}
    {\sqrt{x^2+\frac{a^2}{b^2}{y^2}+\frac{c^2}{b^2}{z^2}}}=\frac{1}{\sqrt{2\pi}}\frac{\partial V_1}{\partial b}(a,b,c),
\end{multline*}
analogously 
$\frac{\partial V_1}{\partial b}(a,b,c)\ge
\frac{\partial V_1}{\partial c}(a,b,c)$,
and equalities hold only if $a=b$ or
$b=c$, respectively.

Therefore the vectors $\nabla V_1$ and $\nabla V_3$ are collinear 
if and only if $a=b=c$. 
But in this case $V_1=(\frac{48}{\pi} V_3)^{1/3}$ is minimal
possible for fixed $V_3$
(see, for example, 
\cite{Burago}, section 20.2, or
apply the isoperimetric inequality
for the ellipsoid with semi-axes
$1/a,1/b,1/c$), 
and the equality is only possible for
a ball. Hence by $\eqref{propertyoffixedpoint}$ $M_{V_1}\cap M_{V_3}$ 
do not contain any point 
with three equal coordinates. 
Therefore the manifolds $M_{V_1}$ and $M_{V_3}$ intersect transversally.
\end{proof}

Further we need also the following

\begin{lemma}\label{2}
The intersection $M_{V_1}\cap M_{V_3}$
contains the unique point of the form
$(a,a,b),a<b$ and the unique point
of the form
$(c,c,d),d>c$.
\end{lemma}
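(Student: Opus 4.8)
The plan is to look only at ellipsoids of revolution of the prescribed volume and to show that along this one‑parameter family $V_1$ is a strictly convex function of the logarithm of the aspect ratio, with minimum exactly at the ball. Put $v:=a_0b_0c_0$. A point of $M_{V_1}\cap M_{V_3}$ of the form $(t,t,s)$ must satisfy $\tfrac{4\pi}{3}t^2s=V_3(a_0,b_0,c_0)$, i.e. $t^2s=v$; so all such points arise by taking $t=v^{1/3}e^{-\theta/3}$, $s=v^{1/3}e^{2\theta/3}$ with $\theta=\log(s/t)\in\mathbb{R}$ and then imposing $V_1(t,t,s)=V_1(a_0,b_0,c_0)$. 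By \eqref{width},
\begin{equation*}
V_1(t,t,s)=\sqrt{2\pi}\,v^{1/3}\,h(\theta),\qquad h(\theta):=\mathbb{E}\sqrt{e^{-2\theta/3}(x^2+y^2)+e^{4\theta/3}z^2},\qquad x,y,z\sim\mathcal{N}(0,1).
\end{equation*}
Thus the lemma reduces to showing that the equation $h(\theta)=V_1(a_0,b_0,c_0)/(\sqrt{2\pi}\,v^{1/3})$ has exactly one root with $\theta>0$ (giving the point $(t,t,s)$ with $s>t$) and exactly one root with $\theta<0$ (giving the point with $s<t$).

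First I would show $h$ is strictly convex on $\mathbb{R}$. For fixed $A:=x^2+y^2$ and $B:=z^2$ put $\psi(\theta)=Ae^{-2\theta/3}+Be^{4\theta/3}$; a short computation yields $2\psi\psi''-(\psi')^2=\tfrac49A^2e^{-4\theta/3}+\tfrac{56}{9}ABe^{2\theta/3}+\tfrac{16}{9}B^2e^{8\theta/3}>0$ whenever $(A,B)\neq(0,0)$, so $(\sqrt{\psi})''=\dfrac{2\psi\psi''-(\psi')^2}{4\psi^{3/2}}>0$ and $\theta\mapsto\sqrt{\psi(\theta)}$ is strictly convex. (Equivalently, $\sqrt{\psi(\theta)}=\bigl\|(\sqrt{A}\,e^{-\theta/3},\,\sqrt{B}\,e^{2\theta/3})\bigr\|_2$ is the Euclidean norm — convex and coordinatewise increasing on $\mathbb{R}_{\ge0}^2$ — of a vector of convex functions.) Taking expectation preserves strict convexity, so $h$ is strictly convex. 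Differentiating under the expectation, $h'(0)=\tfrac13\,\mathbb{E}\dfrac{2z^2-x^2-y^2}{\sqrt{x^2+y^2+z^2}}=0$ because the law of $(x,y,z)$ is invariant under permuting coordinates; hence $\theta=0$ (the ball) is the unique global minimum of $h$, which is therefore strictly decreasing on $(-\infty,0]$ and strictly increasing on $[0,\infty)$. Finally $h(\theta)\to\infty$ as $\theta\to\pm\infty$, since $h(\theta)\ge e^{2\theta/3}\,\mathbb{E}|z|$ and $h(\theta)\ge e^{-\theta/3}\,\mathbb{E}\sqrt{x^2+y^2}$.

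It remains to place the target level strictly above $h(0)$. Now $\sqrt{2\pi}\,v^{1/3}h(0)$ is the mean width of the ball of volume $\tfrac{4\pi}{3}v$, whereas $(a_0,b_0,c_0)$ is an ellipsoid of the same volume which, by \eqref{propertyoffixedpoint}, is not a ball; so by the sharp isoperimetric‑type inequality already used in Step~1 (the ball uniquely minimizes the mean width among convex bodies of a given volume), $\sqrt{2\pi}\,v^{1/3}h(0)<V_1(a_0,b_0,c_0)$. Since $h$ is continuous, strictly unimodal with minimum at $0$, and tends to $+\infty$ at both ends, the value $V_1(a_0,b_0,c_0)/(\sqrt{2\pi}\,v^{1/3})$, being strictly larger than $h(0)$, is attained at exactly two points $\theta_-<0<\theta_+$. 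These correspond to the unique point $(t,t,s)\in M_{V_1}\cap M_{V_3}$ with $s<t$ and the unique one with $s>t$, which are the two points in the statement. The main obstacle is the strict convexity of $h$ in $\theta=\log(s/t)$ — equivalently, the monotonicity of the mean width of an ellipsoid of revolution of fixed volume on either side of the ball; granting this and the isoperimetric comparison, the rest is the intermediate value theorem.
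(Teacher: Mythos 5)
Your proof is correct and follows the same overall skeleton as the paper's: restrict to the one-parameter family of ellipsoids of revolution inside $M_{V_3}$, note that $V_1$ tends to infinity at both ends of the curve and that its value at the ball is strictly below the target level by the sharp isoperimetric-type inequality, and conclude by unimodality. The difference is in how unimodality is established. The paper argues that a critical point of $V_1$ along the curve forces $\nabla V_1$ and $\nabla V_3$ to be proportional (both gradients have the form $(A,A,B)$ there, and both are orthogonal to the tangent vector), which by Step~1 happens only at the ball. You instead prove directly that $h(\theta)=\mathbb{E}\sqrt{e^{-2\theta/3}(x^2+y^2)+e^{4\theta/3}z^2}$ is strictly convex via the pointwise identity $2\psi\psi''-(\psi')^2>0$, with $h'(0)=0$ by exchangeability of the Gaussian coordinates; this is self-contained (it does not invoke Step~1) at the cost of a short computation, and it yields slightly more information (convexity, not just unimodality). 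One small remark: the lemma as printed says the second point has the form $(c,c,d)$ with $d>c$, which is evidently a typo for $d<c$ (compare the use of $q_0=(c,c,d)$, $c>d$ in Step~3, and the count of six points); your two roots $\theta_-<0<\theta_+$ prove exactly the intended statement.
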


\begin{proof}
Consider the curve
$\gamma(t)=(t,t,C/t^2)\subset M_{V_3}$,
where $C=V_3(a_0,b_0,c_0)$.
For $t=C^{1/3}$ the function
$V_1$ takes its minimal value,
and this minimum is strictly less than
$V_1(a_0,b_0,c_0)$.
for large or small $t>0$ it tends to
infinity (since the mean width is an inclusion-monotone 
function of the convex body, 
and the mean width of the long segment 
is large.) Thus by continuity
it suffices to prove
that if the derivative of $V_1$
along $\gamma$ equals to zero
at point $t_0$, then
$t_0=C^{1/3}$.
Note that the the gradients
of both functions $V_3$ and $V_1$
have the form $(A,A,B)$
at points of $\gamma$. The
gradient of $V_3$ is orthogonal to the
tangent vector $(1,1,-2C/t_0^3)$ of the
curve $\gamma(t)$ at $t_0$, since $V_3$
is constant along $\gamma$. This
orthogonality rewrites as 
$A=C\cdot B/t_0^3$. If the
gradient of $V_1$ is also orthogonal
to $(1,1,-2C/t_0^3)$, then these
two gradients are proportional. 
But we have already proved
that this holds only if
$t_0=C^{1/3}$.
\end{proof}

\begin{step}
The set $N:=M_{V_1}\cap  M_{V_3}$
is diffeomorphic to a union of several circles.
\end{step}

\begin{proof}
The set $M_{V_1}$ is bounded 
by the aforementioned
monotonocity argument. 
By the implicit function theorem, the intersection of two smooth 2-dimensional transversally intersecting manifolds in $\R^3$ is 1-dimensional smooth manifold. Since $M_{V_1}$ is bounded, $M_{V_1}\cap  M_{V_3}$ is a 
compact 1-dimensional smooth manifold. 
Hence it is diffeomorphic to a union of several circles.
\end{proof}

\begin{step}
There does not exist another point
$(a_1,b_1,c_1), a_1\geqslant b_1\geqslant c_1$,
with the same values of $V_1,V_2,V_3$ as at
the point
$(a_0,b_0,c_0)$.
\end{step}

\begin{proof}
Now we formulate the 
crucial lemma, whose proof is 
postponed to Section 3.
\begin{lemma}\label{1}
Jacobian of $V(a,b,c)$ is non-zero
 on the set $\{(a,b,c)\in \R^3: \ a>b>c>0\}$.
\end{lemma}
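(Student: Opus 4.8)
The plan is to write the Jacobian explicitly, reduce its non‑vanishing to the statement that three concrete planar points are not collinear, and then prove that by a symmetrised integral inequality. Let $J$ denote the $3\times3$ matrix $\bigl(\partial V_i/\partial x_j\bigr)$ with $(x_1,x_2,x_3)=(a,b,c)$, let $D=\diag(a,b,c)$, and let $w=(w_1,w_2,w_3)$, $w'=(w'_1,w'_2,w'_3)$ be independent standard Gaussian vectors in $\R^3$. Put
\[
A'=\mathbb{E}\frac{(aw_1)^2}{\|Dw\|},\quad B'=\mathbb{E}\frac{(bw_2)^2}{\|Dw\|},\quad C'=\mathbb{E}\frac{(cw_3)^2}{\|Dw\|},
\]
\[
A=\mathbb{E}\frac{(w'_1/a)^2}{\|D^{-1}w'\|},\quad B=\mathbb{E}\frac{(w'_2/b)^2}{\|D^{-1}w'\|},\quad C=\mathbb{E}\frac{(w'_3/c)^2}{\|D^{-1}w'\|},
\]
where $\|Dw\|^2=(aw_1)^2+(bw_2)^2+(cw_3)^2$ and $\|D^{-1}w'\|^2=(w'_1/a)^2+(w'_2/b)^2+(w'_3/c)^2$. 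Differentiating \eqref{width} gives $\nabla V_1=\sqrt{2\pi}\,(A'/a,B'/b,C'/c)$; since $V_2=\frac{\pi}{2}abc\,V_1(\mathcal E^*)$ with $V_1(\mathcal E^*)=\sqrt{2\pi}(A+B+C)$, differentiation gives $\nabla V_2=\frac{\pi\sqrt{2\pi}}{2}\bigl(bc(B+C),\,ca(C+A),\,ab(A+B)\bigr)$, and $\nabla V_3=\frac{4\pi}{3}(bc,ca,ab)$. Multiplying the $i$-th column of $J$ by $x_i$, extracting the column factors $bc,ca,ab$ and the numerical constants, and then subtracting $(A+B+C)$ times the $\nabla V_3$-row from the $\nabla V_2$-row, one obtains
\[
\det J\;=\;-\,\kappa(a,b,c)\,\det\!\begin{pmatrix}A'&B'&C'\\ A&B&C\\ 1&1&1\end{pmatrix},
\]
with $\kappa(a,b,c)$ an explicit positive multiple of $abc$. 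So the lemma is equivalent to: the points $(A',A),(B',B),(C',C)\in\R^2$ are not collinear when $a>b>c$.

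Next I would pin down the signs and cut the problem down to a single scalar inequality. The monotonicity established inside the proof of Step~1 (differentiating \eqref{width}) gives $A'>B'>C'$, and the same monotonicity applied to the dual ellipsoid, whose semiaxes are $1/a<1/b<1/c$, gives $A<B<C$. An elementary rearrangement turns the determinant above into $(A'-B')(B-C)+(B'-C')(B-A)$, a strictly negative plus a strictly positive term; hence the lemma is equivalent to the \emph{strict} inequality $(A'-B')(C-B)>(B'-C')(B-A)$ on $\{a>b>c\}$, the direction being forced by the limiting regime $c\to0$. In addition $\det J$ is homogeneous of degree $3$ (so one may assume $abc=1$), it is antisymmetric under permutations of $(a,b,c)$ — hence $\det J=(a-b)(b-c)(a-c)\,R(a,b,c)$ with $R$ symmetric — and it changes sign under $(a,b,c)\mapsto(1/a,1/b,1/c)$; these are the reductions I would use to lighten the remaining estimate.

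The inequality itself is the crux. Through the two independent Gaussians it becomes $\mathbb{E}_{w,w'}\,\mathcal N(w,w')\big/\bigl(\|Dw\|\,\|D^{-1}w'\|\bigr)\neq0$, where, with $(d_1,d_2,d_3)=(a,b,c)$ and indices mod $3$,
\[
\mathcal N=\sum_{i}\bigl(P_iQ_{i+1}-P_{i+1}Q_i\bigr),\qquad P_i=(d_iw_i)^2,\quad Q_i=(w'_i/d_i)^2 .
\]
The obstacle is genuine: $\mathcal N$ changes sign pointwise (it is negative when $w_2=0$ and positive when $w_2^2$ dominates), so the definiteness of the integral has to be wrung out of the correlation between $\mathcal N$ and the weights $\|Dw\|^{-1},\|D^{-1}w'\|^{-1}$; concretely, one would pair the six terms of $\mathcal N$ and compare them by a pointwise rearrangement/Chebyshev‑type bound after conditioning on the radii $|w|,|w'|$, then control the resulting one‑dimensional angular integrals — this is the step I expect to be hardest. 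A possibly cleaner packaging: with $a=e^{s}$, $b=e^{t}$, $c=e^{-s-t}$ and $F(s,t)=\mathbb{E}\sqrt{e^{2s}w_1^2+e^{2t}w_2^2+e^{-2s-2t}w_3^2}$, which is smooth, convex, and minimal only at the origin, one has $V_1\propto F(s,t)$ and $V_2\propto F(-s,-t)$ on $\{abc=1\}$, and the lemma asserts that $\nabla F(s,t)$ and $\nabla F(-s,-t)$ are nowhere parallel on the sector $\{s>t>-s-t\}$; writing $F=F_{\mathrm{even}}+F_{\mathrm{odd}}$, this is $\nabla F_{\mathrm{even}}\not\parallel\nabla F_{\mathrm{odd}}$ there, with $F_{\mathrm{even}}$ convex and $F_{\mathrm{odd}}$ vanishing on the axis $\{t=0\}$ of the sector, on which the claim reduces to the strict convexity $A'+C'>2B'$ of the triple $(A',B',C')$ — and this last inequality carries the real analytic content in either approach.
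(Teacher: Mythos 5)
Your reduction is sound and in fact lands on essentially the same object as the paper: after clearing the column factors and the row operation, the non-vanishing of $\det J$ is equivalent to the non-vanishing of the $3\times3$ determinant with rows $(A',B',C')$, $(A,B,C)$, $(1,1,1)$, which is exactly the paper's matrix \eqref{matrix} with $A'=G(a,b,c)$, $A=G(\tfrac1a,\tfrac1b,\tfrac1c)$, etc. The orderings $A'>B'>C'$, $A<B<C$ and the identity $\det=(A'-B')(B-C)+(B'-C')(B-A)$ are also correct. But the proof stops exactly where the difficulty begins: you never establish the strict inequality $(A'-B')(C-B)>(B'-C')(B-A)$, and you say so yourself ("this is the step I expect to be hardest", "this last inequality carries the real analytic content in either approach"). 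Identifying the sign from the limit $c\to0$ does not show the determinant cannot vanish in the interior, and the final convexity remark $A'+C'>2B'$ only addresses the one-dimensional slice $t=0$ of the sector, not the full region. As written, the argument is a correct reduction plus an unproven claim, so the lemma is not proved.

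For comparison, the paper's mechanism for this last step is quite specific and is entirely missing from your outline: each entry $G$ is rewritten, via the Gaussian identity $\int_{\R}e^{-Ts^2}\,ds=\sqrt{\pi/T}$, as a one-dimensional integral in an auxiliary variable; the whole determinant then becomes a double integral over $(s,t)$ of an explicit Cauchy-type $3\times3$ determinant, which factors as $(a^2-b^2)(a^2-c^2)(b^2-c^2)(s^2t^2-1)$ over a product of quadratic factors (formula \eqref{formula1}); finally the involution $s\mapsto1/s$, $t\mapsto1/t$ produces a second expression for the same integral, and averaging the two yields the pointwise nonpositive numerator $((st)^2-1)(1-(st)^5)$. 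That symmetrization is precisely the device that "wrings the definiteness out of the correlation" you allude to; some such idea (or a genuinely different one) is needed to close your argument. Incidentally, your observation that the reduced determinant is antisymmetric under $(a,b,c)\mapsto(1/a,1/b,1/c)$ is the finite-dimensional shadow of that involution, so you were circling the right structure without landing the estimate.
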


By the previous Step, 
the set $N:=M_{V_1}\cap  M_{V_3}$ is diffeomorphic to the union of several circles,
and 
$N$ contains exactly 
6 points
with two equal coordinates by Lemma \ref{2}.

On the other hand,
any connected component $\gamma$ of 
$N$ must contain at least
two points with equal coordinates.
Indeed, by Lemma \ref{1} any 
point $p$ on 
$\gamma$ with locally maximal
or locally minimal
value of $V_2$
must have
two equal coordinates
(since the derivatives of all
three functions $V_1,V_2,V_3$ 
along $\gamma$ are equal to 0.)

Denote by $S_{ij}$ the transposition of
the $i$-th and $j$-th coordinates, say
$S_{12}((x,y,z))=(y,x,z)$. Obviously
set 
$N$ is invariant under 
all these symmetries.

Let $\gamma$ be a connected component
of $N$ which contains
the point $p_0=(a,a,b)$, $a<b$. Then $S_{12}(\gamma)$
is also a connected component of $N$
containing
$p_0$. So $S_{12}(\gamma)=\gamma$. 
Analogously, if $\gamma$ contains
a point with another pair of equal
coordinates, it gives another
symmetry, $S_{13}$ or $S_{23}$, which
preserves $\gamma$, and $\gamma$ is invariant
under all the symmetries. In this case
all 6 points from $N$
with two equal coordinates belong to $\gamma$,
and $N=\gamma$.

If not, the second
point $q_0\in\gamma\setminus \{p_0\}$ with two equal coordinates should be
$q_0=(c,c,d)$, $c>d$. But then by continuity
there exists a point on $\gamma$ 
between $p_0$ and $q_0$ with equal
second and third coordinates. The
contradiction. 

Therefore $N$ is a single circle, and
six points on $N$ have equal coordinates.
The intervals between these six points
belong to six Weyl chambers
(corresponding to the six orderings
of coordinates).
Consider two our points $(a_0,b_0,c_0)$ and
$(a_1,b_1,c_1)$ in $M$ which
belong to the same
closed Weyl chamber
$\{a\geqslant b\geqslant c\}$. 
If the function
$V_2$ takes the same value
at these two points, it has a local
maximum or minimum strictly
between them. But 
such a point should have two equal coordinates
as noted before. The contradiction.
\end{proof}

\section{Proof of Lemma \ref{1}}
\subsection{Explicit formula for the Jacobian matrix}
It will be more convenient for us to consider functions $\tilde V_1,\tilde V_2$ and $\tilde V_3$ given by
\begin{equation*}
    \tilde V_3(a,b,c)=\frac{3}{4\pi}V_3(e^a,e^b,e^c)=e^a e^b e^c=e^{a+b+c},
\end{equation*}
\begin{equation*}
\tilde V_2(a,b,c)=\frac{4\sqrt{2}}{3\sqrt{\pi}}\cdot \frac{V_2(e^a,e^b,e^c)}{V_3(e^a,e^b,e^c)}=\mathbb{E}\sqrt{e^{-2a}{x^2}+e^{-2b}{y^2}+e^{-2c}{z^2}},  \ \ x,y,z \sim \mathcal{N}(0,1),
\end{equation*}
\begin{equation*}
\tilde V_1(a,b,c)=\frac{1}{\sqrt{2\pi}}V_1(e^a,e^b,e^c)=\mathbb{E}\sqrt{e^{2a}{x^2}+e^{2b}{y^2}+e^{2c}{z^2}},  \ \ x,y,z \sim \mathcal{N}(0,1).
\end{equation*}
We first compute gradients of these functions:
\begin{equation}
    \nabla \v_3 (a,b,c) =\nabla e^{a+b+c}=e^{a+b+c} \cdot (1,1,1),
\end{equation}

\begin{equation*}
    \v_2 (a,b,c)_a'=\left(\mathbb{E}\sqrt{e^{-2a}{x^2}+e^{-2b}{y^2}+e^{-2c}{z^2}}\right)_a'=-\mathbb{E}\frac{e^{-2a}x^2}{\sqrt{e^{-2a}{x^2}+e^{-2b}{y^2}+e^{-2c}{z^2}}},
\end{equation*}
hence
\begin{multline*}
\nabla \v_2(a,b,c)=\\
=-\left(\mathbb{E}\frac{e^{-2a}x^2}{\sqrt{e^{-2a}{x^2}+e^{-2b}{y^2}+e^{-2c}{z^2}}},\mathbb{E}\frac{e^{-2b}y^2}{\sqrt{e^{-2a}{x^2}+e^{-2b}{y^2}+e^{-2c}{z^2}}},\mathbb{E}\frac{e^{-2c}z^2}{\sqrt{e^{-2C}{x^2}+e^{-2b}{y^2}+e^{-2c}{z^2}}}\right).\end{multline*}
The same way we obtain:
\begin{equation*}
    \nabla \v_1(a,b,c)=\left(\mathbb{E}\frac{e^{2a}x^2}{\sqrt{e^{2a}{x^2}+e^{2b}{y^2}+e^{2c}{z^2}}},
    \mathbb{E}\frac{e^{2b}y^2}{\sqrt{e^{2a}{x^2}+e^{2b}{y^2}+e^{2c}{z^2}}},
    \mathbb{E}\frac{e^{2c}z^2}{\sqrt{e^{2a}{x^2}+e^{2b}{y^2}+e^{2c}{z^2}}}\right).
\end{equation*}
Now we define auxiliary function, in terms of which the Jacobi matrix can be conveniently written as follows

\begin{definition}
$$G(a,b,c)=\mathbb{E}\frac{a^2 x^2}
    {\sqrt{a^2x^2+b^2y^2+c^2z^2}},\ \text{where}\  \  x,y,z \sim \mathcal{N}(0,1). $$
\end{definition}

\begin{statement}
In this notation Jacobi matrix of $V$ has the following form:

$$J_{\v}(a,b,c)=
\begin{pmatrix}
1&1&1 \\
G(e^{-a},e^{-b},e^{-c})&
G(e^{-b},e^{-a},e^{-c})&
G(e^{-c},e^{-a},e^{-b})\\
G(e^{a},e^{b},e^{c})&
G(e^{b},e^{a},e^{c})&
G(e^{c},e^{a},e^{b})
\end{pmatrix}
$$

\end{statement}
It is sufficient to prove that the following matrix is nondegenerate for $a>b>c>0$:
\begin{equation}\label{matrix}
\begin{pmatrix}
1 &
1&
1 \\
G(\frac{1}{a},\frac{1}{b},\frac{1}{c})&
G(\frac{1}{b},\frac{1}{a},\frac{1}{c})&
G(\frac{1}{c},\frac{1}{a},\frac{1}{b})\\
G(a,b,c)&
G(b,a,c)&
G(c,a,b)
\end{pmatrix}.\end{equation}

\subsection
{Alternative formula for the function 
\emph{G(a,b,c)}}
Using the Gaussian integral
\begin{equation*}
    \int_{\mathbb{R}} e^{-Ts^2}ds=\frac{\sqrt{\pi}}{\sqrt{T}}
\end{equation*}
with  $T = a^2x^2+b^2y^2+c^2z^2$
we rewrite the formula of function $G$:
\begin{multline*}
    G(a,b,c)=\frac{1}{(2\pi)^{3/2}} \cdot
    \iiint_{\mathbb{R}^3} \frac{a^2x^2}{\sqrt{a^2x^2+b^2y^2+c^2z^2}} e^{-1/2(x^2+y^2+z^2)} dxdydz=\\
    =\frac{1}{2^{3/2}\pi^2} \cdot\int_{\mathbb{R}} \iiint_{\mathbb{R}^3} a^2x^2 e^{-1/2(x^2+y^2+z^2)-s^2(a^2x^2+b^2y^2+c^2z^2)} dxdydzds =\\
    =\frac{1}{2^{3/2}\pi^2} \cdot\int_{\mathbb{R}} \int_{\mathbb{R}} a^2x^2 e^{-x^2(1/2+s^2)}dx \int_{\mathbb{R}}  e^{-y^2(1/2+s^2)}dy\int_{\mathbb{R}}  e^{-z^2(1/2+s^2)}dzds =\\
    =\frac{1}{2\sqrt{2\pi}} \cdot \int_{\mathbb{R}} \frac{a^2}{(\sqrt{a^2s^2+1/2})^3}\frac{1}{\sqrt{b^2s^2+1/2}}\frac{1}{\sqrt{c^2s^2+1/2}}ds.
\end{multline*}
Using the above formulas and applying a 
change of variables $s \to s/\sqrt{2}$ we  obtain
\begin{equation*}
    G(a,b,c)=\sqrt{\frac{2}{\pi}} \cdot \int_{\mathbb{R}} \frac{1}{s^2+\frac{1}{a^2}}\frac{1}{\sqrt{(a^2s^2+1)(b^2s^2+1)(c^2s^2+1)}}ds.
\end{equation*}
In the same way we see that
\begin{equation*}
    G(\frac{1}{a},\frac{1}{b},\frac{1}{c})=\sqrt{\frac{2}{\pi}} \cdot \int_{\mathbb{R}} \frac{1}{t^2+a^2}\frac{1}{\sqrt{(\frac{t^2}{a^2}+1)(\frac{t^2}{b^2}+1)(\frac{t^2}{c^2}+1)}}dt.
\end{equation*}
Now we write the determinant of \eqref{matrix} in the following form
\begin{equation*}
\int_{\mathbb{R}} \int_{\mathbb{R}}
\begin{vmatrix}
1 \ \ \ \  \  \ 1 \ \ \ \ \ \ \ 1 \\
\frac{1}{t^2+a^2} \ \ \frac{1}{t^2+b^2} \ \ \frac{1}{t^2+c^2} \\
\frac{1}{s^2+\frac{1}{a^2}} \ \ \frac{1}{s^2+\frac{1}{b^2}} \ \ \frac{1}{s^2+\frac{1}{c^2}} \\
\end{vmatrix}
\frac{1}{\sqrt{(\frac{t^2}{a^2}+1)(\frac{t^2}{b^2}+1)(\frac{t^2}{c^2}+1)}}\frac{1}{\sqrt{(a^2s^2+1)(b^2s^2+1)(c^2s^2+1)}}dtds.
\end{equation*}

The following identity holds:
\begin{equation*}
    \begin{vmatrix}
1 \ \ \ \  \  \ 1 \ \ \ \ \ \ \ 1 \\
\frac{1}{t^2+a^2} \ \ \frac{1}{t^2+b^2} \ \ \frac{1}{t^2+c^2} \\
\frac{1}{s^2+\frac{1}{a^2}} \ \ \frac{1}{s^2+\frac{1}{b^2}} \ \ \frac{1}{s^2+\frac{1}{c^2}} \\
\end{vmatrix}
=\frac{(a^2-b^2)(a^2-c^2)(b^2-c^2)(s^2t^2-1)}{(a^2s^2+1)(b^2s^2+1)(c^2s^2+1)(\frac{t^2}{a^2}+1)(\frac{t^2}{b^2}+1)(\frac{t^2}{c^2}+1)}.
\end{equation*}
So the determinant of \eqref{matrix} equals to
\begin{equation} \label{formula1}
\int_{\mathbb{R}} \int_{\mathbb{R}} \frac{(a^2-b^2)(a^2-c^2)(b^2-c^2)(s^2t^2-1)}{[(a^2s^2+1)(b^2s^2+1)(c^2s^2+1)(\frac{t^2}{a^2}+1)(\frac{t^2}{b^2}+1)(\frac{t^2}{c^2}+1)]^\frac{3}{2}} dtds.
\end{equation}
Note that the integrand  is even with respect to $s$ and $t$. Denote by $I$ the same integral but over the set $\mathbb{R}_+ \times \mathbb{R}_+$.
Applying the change of variables $s \to \frac{1}{x}, \ t \to \frac{1}{y}$ we get

\begin{equation*}
I=\int_{\mathbb{R}_{+}} \int_{\mathbb{R}_{+}} \frac{(a^2-b^2)(a^2-c^2)(b^2-c^2)(\frac{1}{x^2y^2}-1)}{[(\frac{a^2}{x^2}+1)(\frac{b^2}{x^2}+1)(\frac{c^2}{x^2}+1)(\frac{1}{y^2a^2}+1)(\frac{1}{y^2b^2}+1)(\frac{1}{y^2c^2}+1)]^\frac{3}{2}} \frac{1}{x^2y^2}dxdy.
\end{equation*}

This integral is similar to $I$: 
the integrands differ by the factor of $(xy)^5$. Indeed,
\begin{equation*}
I=\int_{\mathbb{R}_{+}} \int_{\mathbb{R}_{+}} \frac{(a^2-b^2)(a^2-c^2)(b^2-c^2)(1-x^2y^2)}{[(\frac{1}{x^2}+\frac{1}{a^2})(\frac{1}{x^2}+\frac{1}{b^2})(\frac{1}{x^2}+\frac{1}{c^2})(\frac{1}{y^2}+a^2)(\frac{1}{y^2}+b^2)(\frac{1}{y^2}+c^2)]^\frac{3}{2}} \frac{1}{x^4y^4}dxdy=
\end{equation*}
\begin{equation*}
=\int_{\mathbb{R}_{+}} \int_{\mathbb{R}_{+}} \frac{(a^2-b^2)(a^2-c^2)(b^2-c^2)(1-x^2y^2)}{[(xy)^{-6}(1+\frac{x^2}{a^2})(1+\frac{x^2}{b^2})(1+\frac{x^2}{c^2})(1+y^2a^2)(1+y^2b^2)(1+y^2c^2)]^\frac{3}{2}} \frac{1}{x^4y^4}dxdy=
\end{equation*}

\begin{equation*}
=\int_{\mathbb{R}_{+}} \int_{\mathbb{R}_{+}} \frac{(a^2-b^2)(a^2-c^2)(b^2-c^2)(1-x^2y^2)(xy)^5}{[(1+\frac{x^2}{a^2})(1+\frac{x^2}{b^2})(1+\frac{x^2}{c^2})(1+y^2a^2)(1+y^2b^2)(1+y^2c^2)]^\frac{3}{2}}dxdy.
\end{equation*}
Redenoting the variables we write this as

\begin{equation}\label{formula2}
    I=\int_{\mathbb{R}_{+}} \int_{\mathbb{R}_{+}} \frac{(a^2-b^2)(a^2-c^2)(b^2-c^2)(1-t^2s^2)(ts)^5}{[(1+\frac{t^2}{a^2})(1+\frac{t^2}{b^2})(1+\frac{t^2}{c^2})(1+s^2a^2)(1+s^2b^2)(1+s^2c^2)]^\frac{3}{2}}dtds.
\end{equation}
Now we consider the sum of the two copies of $I$, using $\eqref{formula1}$ and $\eqref{formula2}$. The denominators of integrands
 coincide. Then separately write out 
 the numerators divided by $(a^2-b^2)(a^2-c^2)(b^2-c^2)$
\begin{equation*}
    (s^2t^2-1)+(1-s^2t^2)(st)^5=((st)^2-1)(1-(st)^5)
\end{equation*}
This observation shows that $I+I$ is an integral of a negative function. Hence $I$ is not equal to zero on the set $\{a>b>c>0\}$, 
 that finishes the proof of
 Lemma:

\begin{equation*}
    I+I=(a^2-b^2)(a^2-c^2)(b^2-c^2)\int_{\mathbb{R}_{+}} \int_{\mathbb{R}_{+}} \frac{((st)^2-1)(1-(st)^5)}{[(1+\frac{t^2}{a^2})(1+\frac{t^2}{b^2})(1+\frac{t^2}{c^2})(1+s^2a^2)(1+s^2b^2)(1+s^2c^2)]^\frac{3}{2}}dtds.
\end{equation*}

\section{Acknowledgements}

We are grateful to D.~Zaporozhets 
for introducing us to this problem,
the permanent attention to the
work and helpful
advices;
and to P.~Nikitin and G.~Monakov
for fruitful discussions.

\addcontentsline{toc}{section}{References}

%далее сам список используевой литературы

\end{document}